\newtheorem{theorem}{Theorem}
\newtheorem{lemma}[theorem]{Lemma}
\newtheorem{remark}[theorem]{Remark}
\newcommand{\Ann}[2]{\mathrm{Ann}_{#1}({#2})}
\newcommand{\End}[2]{\mathrm{End}_{#1}({#2})}
\newcommand{\rann}[2]{\mathrm{r.ann}_{#1}({#2})}
\newcommand{\lann}[2]{\mathrm{l.ann}_{#1}({#2})}
\newcommand{\ZZ}{\mathbb{Z}}
\title{A counterexample for a problem on quasi Baer modules}
\subjclass[2000]{}
\keywords{quasi-Baer modules, quasi-retractable, local-retractable}
\author{Christian Lomp}
\address{Department of Mathematics of the Faculty of Science and  Center of Mathematics, University of Porto, Rua Campo Alegre, 687, 4169-007 Porto, Portugal}
\email{clomp@fc.up.pt}
\thanks{The author is a member of CMUP (UID/MAT/00144/2013), which is funded by FCT with national (MEC) and European structural funds (FEDER), under the partnership agreement PT2020.}
\begin{document}
\maketitle

\begin{abstract}
In this note we provide a counterexample to two questions on quasi-Baer modules raised recently by Lee and Rizvi in \cite{LeeRizvi}.
\end{abstract}

\section{Introduction}
Baer rings have been introduced by Kaplansky in \cite{Kaplansky}. In representing finite dimensional algebras as twisted matrix unit semigroup algebras, W.E. Clark introduced quasi-Baer rings in \cite{Clark}. Later, Rizvi and Roman generalized the setting of quasi-Baer rings to modules in \cite{RizviRoman}. In \cite{LeeRizvi}, 
Lee and Rizvi asked whether a quasi-Baer module is always quasi-retractable and whether a q-local-retractable module is local-retractable. For a unital right $R$-module $M$ over an associative unital ring $R$ and a subset $I$ of the the endomorphism ring $S=\mathrm{End}(M_R)$ we set $\Ann{M}{I}=\bigcap_{f\in I} \mathrm{Ker}(f)$.  Following \cite{LeeRizvi} $M$ is called a {\it quasi-Baer} module if
$\Ann{M}{I}$ is  direct summand of $M$, for any $2$-sided ideal $I$ of $S$. By \cite{LeeRizvi}*{Theorem 2.15}, a module $M$ is quasi-Baer if and only if $S$ is quasi-Baer and $M$ is {\it $q$-local-retractable}, where the latter means that $\Ann{M}{I} = \rann{S}{I}M$ for any 2-sided ideal $I$ of $S$ and $r.ann_S(I)$ is the right annihilator of $I$ in $S$. Related to the notion of $q$-local-retractability, Lee and Rizvi defined 
a module $M$ to be  {\it local-retractable} if  $\Ann{M}{L}=\rann{S}{L}M$, for any left ideal $L$ of $S$. Following \cite{RizviRoman1}*{Definition 2.3},  a module  $M$ is called  {\it quasi-retractable} if  $\rann{S}{L}=0$ implies $\Ann{M}{L}=0$, for any left ideal $L$ of $S$. 
The following questions have been raised at the end of \cite{LeeRizvi}:
\begin{enumerate}
\item Is a quasi-Baer module always quasi-retractable?
\item Is a q-local-retractable module local-retractable?
\end{enumerate}
We will answer these questions in the negative.  Note that any quasi-Baer module is q-local-retractable by \cite{LeeRizvi}*{Theorem 2.15} and any local-retractable module is quasi-retractable by definition. Hence a negative answer to the first question will also answer the second question in the negative.  We also note, that the notion of being quasi-Baer only involves 2-sided ideals of the endomorphism ring. Therefore, any module $M$ whose endomorphism ring $S$ is a simple ring is a quasi-Baer module. Moreover, if $S$ is a domain, then $\rann{S}{I}=0$ for any non-zero subset $I$ of $S$. Hence a module $M$ with $S$ being a domain is local-retractable if and only if it is quasi-retractable if and only if any non-zero endomorphism of $S$ is injective, because $\Ann{M}{Sf}=\mathrm{Ker}(f)$ for any $f\in S$. This means that any module $M$ whose endomorphism ring $S$ is a simple domain and that admits an endomorphism which is not injective yields a counter example to both of the questions.

For any ring $S$ and $S$-$S$--bimodule $_SN_S$,  consider the generalized matrix ring
$R=\left(\begin{array}{cc} S & N \\ N & S\end{array}\right)$ with multiplication given by 
$$ \left(\begin{array}{cc} a & x \\ y & b\end{array}\right)\cdot 
 \left(\begin{array}{cc} a' & x' \\ y' & b'\end{array}\right)
=
 \left(\begin{array}{cc} aa' & ax'+xb'  \\ ya'+by' & bb'\end{array}\right)$$
 for all $a,a',b,b' \in S$ and $x,x',y,y'\in N$. The ring $R$ can be seen as the matrix ring of the Morita context $(S,N,N,S)$ with zero multiplication $N\times N \rightarrow S$. Let $M=N\oplus S$. Then $M$ is a right $R$-module by the following action:
$$ (n,s) \cdot 
\left(\begin{array}{cc} a & x \\ y & b\end{array}\right) = (na+sy, sb).$$

Furthermore, $\End{R}{M} \simeq S$, because for any right $R$-linear endomorphism $f:M\rightarrow M$ with $f(0,1)=(n',t)$, we have that
$$(0,0)=f(0,0) = f \left( (0,1)\cdot \left(\begin{array}{cc} 1 & 0 \\ 0 & 0\end{array}\right)\right) = (n',t)\cdot  \left(\begin{array}{cc} 1 & 0 \\ 0 & 0\end{array}\right)
= (n',0).$$ Hence $n'=0$, i.e. $f(0,1)=(0,t)$ and therefore for all $(n,s)\in M$: 
$$f(n,s)=f\left((0,1)\cdot \left(\begin{array}{cc} 0 & 0 \\ n & s\end{array}\right)\right) = (0,t)\cdot  \left(\begin{array}{cc} 0 & 0 \\ n & s\end{array}\right)
= (tn,ts).$$
Thus $f$ is determined by the left multiplication of $t\in S$ on $M=N\oplus S$. Now it is easy to check that the map $S\rightarrow \End{R}{M}$ sending $t$ to  $\lambda_t\in \End{R}{M}$ with $\lambda_t(n,s) = (tn,ts)$, for any $(n,s)\in M$, is an isomorphism of rings.
This means that the structure of $M$ as left $\End{R}{M}$-module is the same as the  left $S$-module structure of $M$.

\begin{lemma}\label{lemma1} Let $N$ be an $S$-bimodule, $R=\left(\begin{array}{cc} S & N \\ N & S\end{array}\right)$  and $M=N\oplus S$ be as above. 
\begin{enumerate}
\item If $S$ is a simple ring, then $M$ is a quasi-Baer right $R$-module.
\item If $S$ is a domain, then $M$ is quasi-retractable if and only if $M$ is local-retractable if and only if $N$ is a torsionfree left $S$-module.
\item If $S$ is a simple domain and $N$ is not torsionfree as left $S$-module, then $M$ is a quasi-Baer right $R$-module that is neither quasi-retractable nor local-retractable.
\end{enumerate}
\end{lemma}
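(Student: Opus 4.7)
The plan is to exploit the identification $\End{R}{M}\cong S$ via $t\mapsto \lambda_t$ already established above, so that the annihilator conditions appearing in the three notions at stake can all be rewritten on the $S$-side. Since $\mathrm{Ker}(\lambda_t)=\{(n,s)\in N\oplus S : tn=0 \text{ and } ts=0\}$, each of the three parts reduces to a transparent statement about multiplication in $S$ and on the left $S$-module $N$.

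For part (1) I would observe that $S$ being simple forces the only two-sided ideals of $\End{R}{M}\cong S$ to be $0$ and $S$. Under the isomorphism these correspond respectively to $\Ann{M}{0}=M$ and to $\Ann{M}{S}\subseteq \mathrm{Ker}(\lambda_1)=0$; both are trivially direct summands, so $M$ is quasi-Baer.

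For part (2), I would invoke the observation already made in the introduction that, when $S$ is a domain, local-retractability, quasi-retractability and injectivity of every nonzero endomorphism of $M$ are mutually equivalent. It then suffices to check that each nonzero $\lambda_t$ is injective iff ${}_SN$ is torsionfree: from $\lambda_t(n,s)=(tn,ts)=(0,0)$ the domain condition forces $s=0$ as soon as $t\neq 0$, so $\lambda_t$ is injective precisely when left multiplication by $t$ on $N$ is injective, which, ranging over all $0\neq t\in S$, is exactly torsionfreeness of ${}_SN$.

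Part (3) is then immediate by combining (1) and (2), since local-retractability implies quasi-retractability by definition, so the failure of quasi-retractability provided by (2) forces failure of local-retractability as well. There is no substantive obstacle in the argument; the only point that deserves care is to translate two-sided ideals of $\End{R}{M}$ and kernels of its elements into the corresponding data on the $S$-side via the isomorphism $t\mapsto \lambda_t$, after which everything follows from elementary properties of simple rings and domains.
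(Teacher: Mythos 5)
Your proposal is correct and follows essentially the same route as the paper: part (1) reduces to the trivial ideals of the simple ring $S\simeq\End{R}{M}$, and part (2) identifies both retractability notions with injectivity of every nonzero $\lambda_t$, i.e.\ torsionfreeness of ${}_SN$. The paper merely packages (2) slightly differently, checking the forward implication with $L=Ss$ and the converse for an arbitrary nonzero left ideal $L$, but the content is the same.
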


\begin{proof}
(1) Clearly if $S$ is a simple ring, then the only 2-sided ideals of $S\simeq \End{R}{M}$ are ${0}$ and $\End{R}{M}$, whose left annihilators are direct summands of $M$.

(2) Any local-retractable module is quasi-retractable. Suppose $S$ is a domain and $M$ is quasi-retractable. Let $0\neq s\in S$. Since $S$ is a domain, $\rann{S}{L}=0$, for $L=Ss$. Hence, as $M$ is quasi-retractable, $ 0=\Ann{M}{L}=\lann{N}{L}\oplus\lann{S}{L}$, i.e. $sn\neq 0$ for all $0\neq n\in N$. This shows that $N$ is torsionfree as left $S$-module.  If $S$ is a domain and $N$ is a torsionfree left $S$-module, then for any non-zero left ideal $L$ of $S$, $s\in S$ and $n\in N$: $L\cdot (n,s)=(Ln,Ls)\neq 0$, i.e. $\Ann{M}{L}=0$ if $L\neq 0$, which shows that $M$ is local-retractable.

(3) follows from (1) and (2).
\end{proof}

\begin{theorem} For any simple Noetherian domain $D$ with ring of fraction $Q$, such that $Q\neq D$, there exist a ring $R$ and a right $R$-module $M$ such that $\mathrm{End}({M_R})\simeq D$ and $M$ is a quasi-Baer right $R$-module, hence q-local-retractable, but neither quasi-retractable nor local-retractable.
\end{theorem}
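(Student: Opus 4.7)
The plan is to apply Lemma~\ref{lemma1}(3). Given a simple Noetherian domain $D$ with $Q\neq D$, I would set $S=D$ and search for an $S$-bimodule $N$ which is nonzero but not torsionfree as a left $S$-module. The natural candidate is $N=Q/D$, equipped with the $D$-bimodule structure coming from left and right multiplication inside $Q$.

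The main technical step is to show that $N=Q/D$ is not torsionfree on the left. Here I would invoke Goldie's theorem: a left Noetherian domain is left Goldie, hence left Ore, so the classical left ring of fractions exists and coincides with $Q$, which is a division ring. In particular, every $q\in Q$ can be written as $q=c^{-1}d$ with $0\neq c\in D$ and $d\in D$. Picking any $q\in Q\setminus D$ (such a $q$ exists by the hypothesis $Q\neq D$), the class $\ov{q}=q+D$ is a nonzero element of $N$, and $c\,\ov{q}=\ov{d}=\ov{0}$. Thus $0\neq c\in D$ left-annihilates the nonzero element $\ov{q}$, proving that $N$ fails to be torsionfree as a left $D$-module.

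With $S=D$ simple and $N=Q/D$ not left-torsionfree, Lemma~\ref{lemma1}(3) immediately yields that $M=N\oplus S$ is a quasi-Baer right $R$-module over
$$R=\left(\begin{array}{cc} S & N \\ N & S\end{array}\right)$$
which is neither quasi-retractable nor local-retractable. The identification $\End{R}{M}\simeq S=D$ is exactly the endomorphism ring computation carried out just before Lemma~\ref{lemma1}, and quasi-Baerness automatically implies q-local-retractability by \cite{LeeRizvi}*{Theorem 2.15}, so all conclusions of the theorem follow.

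The only nontrivial input beyond Lemma~\ref{lemma1} is the use of the \emph{left} Ore condition for a Noetherian domain; without it one would only see that $Q/D$ is right torsion, which is the wrong side for the application. Apart from this subtlety the argument is essentially a substitution into Lemma~\ref{lemma1}(3).
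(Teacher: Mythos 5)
Your proposal is correct and follows exactly the paper's route: take $S=D$, $N=Q/D$, and apply Lemma~\ref{lemma1}(3). The paper states without elaboration that $Q/D$ is not torsionfree; your explicit justification via the left Ore condition (every $q\in Q$ is $c^{-1}d$, so any $q\in Q\setminus D$ gives a nonzero left-torsion class in $Q/D$) fills in the one detail the paper leaves implicit, and correctly identifies that it is the \emph{left} fraction representation that is needed.
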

\begin{proof}
The bimodule $N=Q/D$ and ring $S=D$ satisfy the conditions of Lemma \ref{lemma1}(3), i.e. $S$ is a simple domain and $N$ is not torsionfree.
\end{proof}
In order to obtain a concrete example, one might choose $D=A_n(k)$, the $n$-th Weyl algebra over a field $k$ of characteristic zero, or more generally $D=K[x;d]$ with $d$ a non-inner derivation of $K$ and $K$ a $d$-simple right Noetherian domain (see \cite{Lam}*{Theorem 3.15}).

\begin{remark} The construction in Lemma \ref{lemma1} is a special case of a more general construction. Let $G$ be a finite group with neutral element $e$. A unital associative $G$-graded algebra is an algebra $A$ with decomposition $A=\bigoplus_{g\in G} A_g$ (as $\ZZ$-module), such that $A_gA_h\subseteq A_{gh}$. The dual $\ZZ[G]^*$ of the integral group ring of $G$ acts on $A$ as follows. For all $f\in \ZZ[G]^*$ and $a=\sum_{g\in G} a_g$ one sets $f\cdot a = \sum_{g\in G} f(g)a_g$. Let $\{p_g \mid g\in G\}$ denote the dual basis of $\ZZ[G]^*$. The smash product of $A$ and $\ZZ[G]^*$ is defined to be the algebra $A\# \ZZ[G]^* = A\otimes_{\ZZ}  \ZZ[G]^*$ with product defined as $(a\# p_g)(b \# p_h) = a b_{gh^{-1}} \# p_h$ for all $a\#p_g, b\#p_h \in A\# \ZZ[G]^*$ (see \cite{CohenMontgomery}*{pp 241}). The identity element of $A\# \ZZ[G]^*$ is $1\# \epsilon$, where $\epsilon$ is the function defined by $\epsilon(g)=1$, for all $g\in G$. The algebra $A$ is a left $A\# \ZZ[G]^*$-module via $(a\# f) \cdot b = a (f\cdot b)$, for all $a\# f \in A\#\ZZ[G]^*$ and $b\in A$. Writing endomorphisms opposite to scalars we note that for any $z\in A_e$, the right multiplication $\rho_z: A\rightarrow A$ defined as $(b)\rho_z=bz$, for all $b\in A$ is left $A\# \ZZ[G]^*$-linear, because for any $f\in \ZZ[G]^*$, homogeneous $b\in A_g$ and $a \in A$ we have
$ ((a\# f)\cdot b)\rho_z = a f(g) b z = af(g) (b)\rho_z = (a\#f) \cdot (b)\rho_z,$ since $(b)\rho_z=bz\in A_g$.
On the other hand  let  $\psi:A\rightarrow A$ be any left $A\# \ZZ[G]^*$-linear map and set $z=(1)\psi$,  the image of $1$ under $\psi$. Then $z\in A_e$, because for any $g\in G\setminus \{e\}$ we have 
$p_g\cdot z = (1\#p_g)\cdot (1)\psi = (p_g\cdot 1)\psi = 0,$ as $1\in A_e$. Thus $z\in A_e$ and for any $b\in A$: $(b)\psi = ((b\# p_e)\cdot 1)\psi = (b\#p_e)\cdot z = bz = (b)\rho_z.$
Hence we have a bijective correspondence between elements of $A_e$ and left $A\# \ZZ[G]^*$-linear endomorphisms of $A$, i.e. $\rho: A_e \rightarrow \mathrm{End}({_{A\#\ZZ[G]^*}{A}})$ with $\rho(z)=\rho_z$ is an isomorphism of $\ZZ$-modules. Since $(b)\rho_{zz'}=bzz' = \left((b)\rho_{z}\right)\rho_{z'} = (b) (\rho_{z}\circ\rho_{z'})$ and $\rho_1 = \mathrm{id}_A$  we see that $\rho$ is an isomorphism of rings (see also \cite{Lomp}). To summarize: any associative unital $G$-graded algebra $A$ is a left module over $R=A\# \ZZ[G]^*$, such that $\mathrm{End}(_RA)$ is isomorphic to the zero component $A_e$. 

In the case of Lemma \ref{lemma1} consider  $A=S\oplus N$ as the trivial extension of $S$ by the $S$-bimodule $N$, i.e. $(a,x)(b,y)=(ab,ay+xb)$. Then $A$ is graded by the group $G=\{0,1\}$ of order $2$ with $A_0=S$ and $A_1=N$. It is not difficult to show that the  smash product $A\#(\ZZ G)^*$ is anti-isomorphic to the ring $R=\left(\begin{array}{cc} S & N \\ N &S\end{array}\right)$ and that the left action of $A\#(\ZZ G)^*$ on $A$ corresponds to the right action of $R$ on $M=N\oplus S$.
\end{remark}

\begin{bibdiv}
 \begin{biblist}
 \bib{BirkenmeierParkRizvi}{book}{
   author={Birkenmeier, Gary F.},
   author={Park, Jae Keol},
   author={Rizvi, S. Tariq},
   title={Extensions of rings and modules},
   publisher={Birkh\"auser/Springer, New York},
   date={2013},
   pages={xx+432},
   isbn={978-0-387-92715-2},
   isbn={978-0-387-92716-9},
   review={\MR{3099829}},
   doi={10.1007/978-0-387-92716-9},
}

\bib{Clark}{article}{
   author={Clark, W. Edwin},
   title={Twisted matrix units semigroup algebras},
   journal={Duke Math. J.},
   volume={34},
   date={1967},
   pages={417--423},
   issn={0012-7094}
 }

\bib{CohenMontgomery}{article}{
   author={Cohen, M.},
   author={Montgomery, S.},
   title={Group-graded rings, smash products, and group actions},
   journal={Trans. Amer. Math. Soc.},
   volume={282},
   date={1984},
   number={1},
   pages={237--258},
   issn={0002-9947},
   review={\MR{728711}},
   doi={10.2307/1999586},
}
	
\bib{Kaplansky}{book}{
   author={Kaplansky, Irving},
   title={Rings of operators},
   publisher={W. A. Benjamin, Inc., New York-Amsterdam},
   date={1968},
   pages={viii+151},
}

\bib{Lam}{book}{
   author={Lam, T. Y.},
   title={A first course in noncommutative rings},
   series={Graduate Texts in Mathematics},
   volume={131},
   edition={2},
   publisher={Springer-Verlag, New York},
   date={2001},
   pages={xx+385},
   isbn={0-387-95183-0},
   review={\MR{1838439}},
   doi={10.1007/978-1-4419-8616-0},
}

\bib{LeeRizvi}{article}{
title = {Direct sums of quasi-Baer modules},
journal = {Journal of Algebra },
volume = {456},
pages = {76 - 92},
year = {2016},
issn = {0021-8693},
doi = {http://dx.doi.org/10.1016/j.jalgebra.2016.01.039},
url = {http://www.sciencedirect.com/science/article/pii/S0021869316001058},
author = {Gangyong Lee},
author={S. Tariq Rizvi}
}
\bib{Lomp}{article}{
   author={Lomp, Christian},
   title={A central closure construction for certain algebra extensions.
   Applications to Hopf actions},
   journal={J. Pure Appl. Algebra},
   volume={198},
   date={2005},
   number={1-3},
   pages={297--316},
   issn={0022-4049},
   review={\MR{2133688}},
   doi={10.1016/j.jpaa.2004.10.009},
}
	
\bib{RizviRoman1}{article}{
   author={Rizvi, S. Tariq},
   author={Roman, Cosmin S.},
   title={On direct sums of Baer modules},
   journal={J. Algebra},
   volume={321},
   date={2009},
   number={2},
   pages={682--696},
   issn={0021-8693},
   review={\MR{2483287}},
   doi={10.1016/j.jalgebra.2008.10.002},
}

\bib{RizviRoman}{article}{
   author={Rizvi, S. Tariq},
   author={Roman, Cosmin S.},
   title={Baer and quasi-Baer modules},
   journal={Comm. Algebra},
   volume={32},
   date={2004},
   number={1},
   pages={103--123},
   issn={0092-7872},
   review={\MR{2036224}},
   doi={10.1081/AGB-120027854},
}

\end{biblist}
\end{bibdiv}

\end{document}